\newtheorem{theorem}{Theorem}
\newtheorem{corollary}[theorem]{Corollary}
\newtheorem{lemma}[theorem]{Lemma}
\newenvironment{proof}[1][Proof]{\noindent\textbf{#1.} }{\ \rule{0.5em}{0.5em}}
\title{Counting Dominating Sets of Graphs}
\author{Irene Heinrich\\
University Kaiserslautern \\
\and Peter Tittmann \\
University of Applied Sciences Mittweida
}
\begin{document}
\maketitle

\begin{abstract}
	Counting dominating sets in a graph $G$ is closely related to the neighborhood
	complex of $G$. We exploit this relation to prove that the number of dominating
	sets $d(G)$ of a graph is determined by the number of complete bipartite subgraphs of
	its complement. More precisely, we state the following. Let $G$ be a simple 
	graph of order $n$ such that its complement $\bar{G}$ has exactly $a(G)$ 
	subgraphs isomorphic to $K_{2p,2q}$ and exactly $b(G)$ subgraphs isomorphic to 
	$K_{2p+1,2q+1}$. Then
	\[
	d(G)=2^n -1 +2[a(G)-b(G)].
	\]
	We also show some new relations between the domination polynomial and the 
	neighborhood polynomial of a graph.
\end{abstract}

\section{Introduction}
Counting dominating sets in graphs offers a multitude of relations to other graphical
enumeration problems. The number of dominating sets in a graph is related to the counting
of bipartite subgraphs, vertex subsets with respect to a given cardinality of the 
intersection of their neighborhoods, 
induced subgraphs such that all their components have even order \cite{Kotek2014}, and 
forests of external activity zero \cite{Dod2015}. In this paper, we show that already 
the number of complete bipartite subgraphs is sufficient to determine the number of dominating
sets of a graph. In addition, we obtain a new proof for the known fact \cite{Brouwer2009} that
the number of dominating sets of any finite graph is odd.

Let $G=(V,E)$ be a simple undirected graph with vertex set $V$ and edge set $E$. The
open neighborhood of a vertex $v$ of $G$ is denoted by $N(v)$ or $N_G(v)$. It is the
set of all vertices of $G$ that are adjacent to $v$. The closed neighborhood of $v$ is
defined by $N_G[v]=N_G(v)\cup \{v\}$. We generalize the neighborhood definitions to
vertex subsets $W\subseteq V$: 
\begin{align*}
	N_G(W) &= \bigcup_{w\in W}N_G(w) \setminus W, \\
	N_G[W] &= N_G(W) \cup W.
\end{align*}
The \emph{edge boundary} $\partial W$ of a vertex subset $W$ of $G$ is
\[ \partial W = \{\{u,v \} \mid u\in W\text{ and }v\in V\setminus W \}, \]
i.e., the set of all edges of $G$ with exactly one end vertex in $W$. Throughout this
paper, we denote by $n$ the number of vertices and by $m$ the number of edges of $G$.

A \emph{dominating set} of $G=(V,E)$ is a vertex subset $W\subseteq V$ with $N[W]=V$.
We denote the number of dominating sets of size $k$ in $G$ by $d_k(G)$. The family of
all dominating sets of $G$ is denoted by $\mathcal{D}(G)$. The
\emph{domination polynomial} $D(G,x)$ is the ordinary generating function for the
number of dominating sets of $G$:
\[ 
	D(G,x) = \sum_{k=0}^{n}d_k(G)x^k = \sum_{W\in \mathcal{D}(G)}x^{|W|} 
\]
 This polynomial has been introduced in \cite{Arocha2000}; it has been further investigated 
 in \cite{Akbari2010, Dod2015, Kotek2012, Kotek2014}.
 The number of dominating sets of $G$ is $d(G)=D(G,1)$.

\section{Alternating Sums of Neighborhood Polynomials}
The \emph{neighborhood complex} $\mathcal{N}(G)$ of $G$ is the family of all subsets
of open neighborhoods of vertices of $G$:
\[ 
	\mathcal{N}(G) =\{X \mid \exists v\in V:X\subseteq N(v) \}  
\]
If $G$ has no isolated vertex, then $\{v\}\in\mathcal{N}(G)$ for any $v\in V$. The
neighborhood complex is a lower set in the Boolean lattice $2^V$, which means that the
relations $X\in\mathcal{N}(G)$ and $Y\subseteq X$ imply $Y\in \mathcal{N}(G)$. Maximal
elements in $\mathcal{N}(G)$ are open neighborhoods of vertices. We define for any 
$k\in\mathbb{N}$, $n_k(G)=|\{X\mid X\in\mathcal{N}(G), |X|=k\}|$. The 
\emph{neighborhood polynomial} of $G$, introduced in \cite{Brown2008}, is
\[ 
	N(G,x) = \sum_{k=0}^{n}n_k(G)x^k = \sum_{W\in \mathcal{N}(G)} x^{|W|}.
\]
If $G'$ is a graph obtained from $G$ by adding some isolated vertices, then 
$\mathcal{N}(G')=\mathcal{N}(G)$ and hence $N(G',x)=N(G,x)$.

\begin{theorem}
	The neighborhood polynomial of any simple graph $G=(V,E)$ satisfies
	\begin{equation*} 
		N(G,x) = \sum_{v\in V}(1+x)^{\deg v} 
		- \sum_{W\subseteq V:|W|\geq 2} (-1)^{|W|} 
		(1+x)^{\left\vert\bigcap_{w\in W}N(w)\right\vert} .
	\end{equation*}
\end{theorem}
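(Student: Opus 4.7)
The plan is to view $\mathcal{N}(G)$ as the union of the power sets of the open neighborhoods and apply the inclusion--exclusion principle, upgraded to a weighted version tracking $|X|$.

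First, I would record the identity
\[
\mathcal{N}(G) = \bigcup_{v\in V} 2^{N(v)},
\]
which is immediate from the definition: $X\in\mathcal{N}(G)$ if and only if $X\subseteq N(v)$ for some $v$. For a fixed subset $W\subseteq V$, a set $X$ lies in $\bigcap_{w\in W} 2^{N(w)}$ precisely when $X\subseteq \bigcap_{w\in W} N(w)$, so the weighted enumeration of that intersection is
\[
\sum_{X\subseteq \bigcap_{w\in W} N(w)} x^{|X|} = (1+x)^{\left|\bigcap_{w\in W}N(w)\right|}.
\]
In particular, for $W=\{v\}$ this is $(1+x)^{\deg v}$.

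Next I would apply inclusion--exclusion on the union above, but weighted by $x^{|X|}$. This is legitimate because $\sum_{X\in\mathcal{N}(G)} x^{|X|}$ expands as a finite sum over $X$, and for each fixed $X$ the scalar indicator identity
\[
\mathbf{1}\!\left[X\in\bigcup_{v}2^{N(v)}\right] = \sum_{\emptyset\neq W\subseteq V} (-1)^{|W|+1}\,\mathbf{1}\!\left[X\in\bigcap_{w\in W}2^{N(w)}\right]
\]
holds; multiplying by $x^{|X|}$ and summing over all $X\subseteq V$ (in either order) yields
\[
N(G,x) = \sum_{\emptyset\neq W\subseteq V} (-1)^{|W|+1}\,(1+x)^{\left|\bigcap_{w\in W}N(w)\right|}.
\]
Finally I would separate the singleton contribution $\sum_{v\in V}(1+x)^{\deg v}$ from the terms with $|W|\geq 2$ and absorb the sign, giving exactly the claimed formula.

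No real obstacle is expected: the only subtle point is justifying that the standard inclusion--exclusion identity can be multiplied through by $x^{|X|}$ and reindexed, which I would handle by pointing out that everything is a finite sum of polynomials, and that the identity holds term-by-term in $X$.
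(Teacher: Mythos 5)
Your proposal is correct and follows essentially the same route as the paper: identifying $\mathcal{N}(G)$ with $\bigcup_{v\in V}2^{N(v)}$, noting $\bigcap_{w\in W}2^{N(w)}=2^{\bigcap_{w\in W}N(w)}$, and applying the generating-function (weighted) form of inclusion--exclusion. Your term-by-term justification of the weighting is a slightly more explicit rendering of the same argument; no substantive difference.
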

\begin{proof}
	First we rewrite the statement of the theorem as
	\begin{equation} 
		N(G,x) = \sum_{\emptyset\neq W\subseteq V} (-1)^{|W|+1} 
		(1+x)^{\left\vert\bigcap_{w\in W}N(w)\right\vert} , 
		\label{eq:nbp-proof}
	\end{equation}
	which is correct as $\deg v = |N(v)|$ for any $v\in V$. The neighborhood polynomial
	is given by
	\begin{equation}
		N(G,x) = \sum_{W\in \mathcal{N}(G)} x^{|W|} 
		= \sum_{W \in \bigcup_{v\in V} 2^{N(v)}} x^{|W|}.
		\label{eq:nbp-proof2}
	\end{equation}
	Comparing Equations (\ref{eq:nbp-proof}) and (\ref{eq:nbp-proof2}), we obtain
	\begin{equation}
		\sum_{W \in \bigcup_{v\in V} 2^{N(v)}} x^{|W|} 
		= \sum_{\emptyset\neq W\subseteq V} (-1)^{|W|+1} 
		(1+x)^{\left\vert\bigcap_{w\in W}N(w)\right\vert}.
		\label{eq:nbp-proof3}
	\end{equation}
	According to the principle of inclusion--exclusion, we have
	\[  \left\vert \bigcup_{v\in V} 2^{N(v)}\right\vert 
	= \sum_{\emptyset\neq W\subseteq V} (-1)^{|W|+1}  
	\left\vert\bigcap_{w\in W}2^{N(w)}\right\vert.
	\]
	Equation (\ref{eq:nbp-proof3}) is just the counting version (or generating function
	expression) of this principle.
\end{proof}
\begin{theorem}\label{theo:null}
	Let $G=(V,E)$ be a simple graph with $E\neq \emptyset$ that is not the disjoint
	union of a complete bipartite graph and an empty (that is edgeless) graph. Then
	\[ \sum_{F\subseteq E} (-1)^{|F|}N(G-F,x) =0. \]
\end{theorem}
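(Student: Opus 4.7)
The plan is to apply the formula from the previous theorem to each $N(G-F,x)$, exchange the order of summation so that the outer sum runs over nonempty $W\subseteq V$, and reduce to analyzing, for each such $W$,
\[
T(W):=\sum_{F\subseteq E}(-1)^{|F|}(1+x)^{\left|\bigcap_{w\in W}N_{G-F}(w)\right|}.
\]
The goal will then be to show that $T(W)=0$ for every nonempty $W$ under the stated hypothesis.

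To attack $T(W)$, I would expand $(1+x)^k=\sum_{S}x^{|S|}$ combinatorially. Writing $I_W:=\bigcap_{w\in W}N_G(w)$ and observing that a vertex $v\in I_W$ survives in $\bigcap_{w\in W}N_{G-F}(w)$ if and only if $F$ contains no edge $\{v,w\}$ with $w\in W$, I would interchange sums once more to obtain
\[
T(W)=\sum_{S\subseteq I_W}x^{|S|}\sum_{F\subseteq E\setminus E_{S,W}}(-1)^{|F|},
\]
where $E_{S,W}:=\{\{v,w\}:v\in S,\ w\in W\}$ is a subset of $E$ by the definition of $I_W$. The inner $F$-sum vanishes unless $E\setminus E_{S,W}=\emptyset$.

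Hence a nonzero $T(W)$ forces $E=E_{S,W}$ for some $S\subseteq I_W$. Since $S\subseteq I_W$ automatically gives $S\cap W=\emptyset$ (as $G$ has no loops), this equation says that $G$ is precisely the complete bipartite graph $K_{|S|,|W|}$ on the vertex set $S\cup W$, together with isolated vertices in $V\setminus(S\cup W)$. Because the hypothesis explicitly excludes this configuration, $T(W)=0$ for every nonempty $W$, and the alternating sum vanishes.

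The only delicate point I anticipate is the double interchange of summation together with the careful accounting, for fixed $W$ and $S\subseteq I_W$, of which edge subsets $F\subseteq E$ leave all of $S$ inside $\bigcap_{w\in W}N_{G-F}(w)$. Once that bookkeeping is in place, the structural characterization of the nonvanishing case, and hence the use of the hypothesis, follow essentially for free.
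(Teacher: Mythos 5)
Your argument is correct, and it is a genuinely different route from the paper's. The paper works directly with the definition $N(H,x)=\sum_{W\in\mathcal{N}(H)}x^{|W|}$: after replacing $F$ by $E\setminus F$ it groups the double sum by $W\in\mathcal{N}(G)$ and kills each inner sum $\sum_{F\in\mathcal{E}(W)}(-1)^{m-|F|}$ with a sign-reversing involution that toggles a suitable edge $e$; the hypothesis enters only in the case $E=\partial W$, where one must exhibit a vertex $z'$ in the second partition class with $\emptyset\neq N(z')\subsetneq W$ in order to find the edge to toggle. You instead feed Theorem 1 into each $N(G-F,x)$, interchange sums, expand $(1+x)^{|\cdot|}$ over subsets $S$ of the common neighborhood, and reduce everything to the identity $\sum_{F\subseteq E'}(-1)^{|F|}=0$ for $E'\neq\emptyset$; the hypothesis enters as the exact structural characterization $E=E_{S,W}$ of the surviving terms (with the degenerate case $S=\emptyset$ already ruled out by $E\neq\emptyset$). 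Your bookkeeping checks out: $\bigcap_{w\in W}N_{G-F}(w)$ consists precisely of those $v\in I_W$ with $F\cap E_{\{v\},W}=\emptyset$, so the $F$-sum does factor as claimed, and $E_{S,W}\subseteq E$ because $S\subseteq I_W$. Your approach is more mechanical (no involution, no case split) but leans on Theorem 1, whereas the paper's pairing argument is self-contained. A pleasant bonus of your method is that it transparently identifies the nonvanishing terms, so applied to $G=K_{p,q}$ it immediately yields Theorem \ref{theo:complete} as well (the only surviving pairs $(W,S)$ are the two partition classes in either order, giving $(-1)^{p+1}x^{q}+(-1)^{q+1}x^{p}$), which would render Lemmas \ref{lemma:pi} and \ref{lemma:parity} unnecessary.
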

\begin{proof}
	Let $W\in \mathcal{N}(G)$ be a vertex subset subset that is contained in the 
	neighborhood complex of $G$. We define the family of edge subsets
	\[ 
		\mathcal{E}(W) =\{F \mid F\subseteq E\text{ and }
		W\in\mathcal{N}(V,F) \}. 
	\]
	Using the definition of the neighborhood polynomial, we obtain
	\begin{align*} 
		\sum_{F\subseteq E} (-1)^{|F|}N(G-F,x) 
		&= \sum_{F\subseteq E} (-1)^{m-|F|}N((V,F),x) \\
		&=  \sum_{F\subseteq E} \; \sum_{W\in \mathcal{N}((V,F))}(-1)^{m-|F|} x^{|W|} \\
		&=  \sum_{W\in \mathcal{N}(G)} x^{|W|} \sum_{F\in \mathcal{E}(W)} (-1)^{m-|F|} 
	\end{align*}
	We will show that the inner sum vanishes, which provides the statement of the theorem. 
	Assume that $E\setminus \partial W\neq \emptyset$. Then for any edge 
	$e\in E\setminus \partial W$ and any edge subset $A\subseteq E$ the relation 
	$A\setminus\{e\}\in \mathcal{E}(W)$ is satisfied if and only if 
	$A\cup \{e\}\in\mathcal{E}(W)$, which yields
	\begin{equation} 
		\sum_{F\in \mathcal{E}(W)} (-1)^{m-|F|}=0. 
		\label{eq:zerosum}
	\end{equation}
	If $E=\partial W$, then $G$ is bipartite and $W$ is one partition set; we denote the 
	second one by $Z$. Since $W\in\mathcal{N}(G)$, there is a vertex $z\in Z$ with $N(z)=W$.
	There exists a vertex $z'\in Z$ with $\emptyset\neq N(z')\subset W$ 
	(\textquotedblleft $ \subset$\textquotedblright meaning proper subset), otherwise $G$ 
	would be a disjoint union of a complete bipartite graph and an empty graph, the latter
	one possibly being the null graph. Now let $e$ be an edge incident to $z'$. Then
	for any set $A\in\mathcal{E}(W)$ that does not contain $e$, the set $A\cup\{e\}$ is also
	in $\mathcal{E}(W)$ and vice versa, $A\cup\{e\}\in\mathcal{E}(W)$ implies
	$A\in\mathcal{E}(W)$.
	The consequence is again that Equation (\ref{eq:zerosum}) is 
	satisfied, which completes the proof.
\end{proof}

\begin{lemma}\label{lemma:pi}
	Let $k,r$ be two positive integers and $\{E_1,\ldots,E_r\}$ a partition of a set $E$ 
	with exactly $r$ blocks of size $k$. Define a family of subsets of $E$ by
	$\mathcal{M}_{kr}=(2^{E_1}\setminus \{E_1\})\times\cdots\times (2^{E_r}\setminus \{E_r\})$.
	Then 
	\[
		\sum_{A\in M_{kr}}(-1)^{|A|} = (-1)^{(k-1)r}.
	\]
\end{lemma}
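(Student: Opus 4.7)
The plan is to exploit the fact that $\mathcal{M}_{kr}$ is a Cartesian product of identical factors, so the alternating sum factors as an $r$-fold product. Since the blocks $E_1,\ldots,E_r$ are pairwise disjoint, a tuple $A=(A_1,\ldots,A_r)\in\mathcal{M}_{kr}$ (with $A_i\subseteq E_i$, $A_i\neq E_i$) has total cardinality $|A|=\sum_{i=1}^r|A_i|$, and hence $(-1)^{|A|}=\prod_{i=1}^r(-1)^{|A_i|}$. Therefore
\[
  \sum_{A\in\mathcal{M}_{kr}}(-1)^{|A|}
  =\prod_{i=1}^r\;\sum_{\substack{A_i\subseteq E_i\\ A_i\neq E_i}}(-1)^{|A_i|}.
\]

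Next I would evaluate a single factor. For a fixed block $E_i$ of size $k$, the full binomial sum is
\[
  \sum_{A_i\subseteq E_i}(-1)^{|A_i|}=(1-1)^k=0
\]
for $k\geq 1$, so removing the single excluded term $A_i=E_i$ (which contributes $(-1)^k$) gives
\[
  \sum_{\substack{A_i\subseteq E_i\\ A_i\neq E_i}}(-1)^{|A_i|}=-(-1)^k=(-1)^{k-1}.
\]
Multiplying the $r$ identical factors yields $\bigl((-1)^{k-1}\bigr)^{r}=(-1)^{(k-1)r}$, which is the claim.

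There is no real obstacle here: the statement reduces to the elementary identity $\sum_{S\subseteq [k]}(-1)^{|S|}=0$ for $k\geq 1$, applied blockwise. The only care needed is to confirm that $|A|$ is interpreted as the total number of elements across the blocks (so that the exponent $(-1)^{|A|}$ factors), and to note that the hypothesis $k\geq 1$ is what makes the full binomial sum vanish, so that the lone excluded term $A_i=E_i$ contributes the $(-1)^{k-1}$ per factor.
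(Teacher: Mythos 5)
Your proof is correct and follows essentially the same route as the paper: factor the alternating sum over the Cartesian product into $r$ identical single-block sums, and evaluate each as $(-1)^{k-1}$ by removing the excluded term $E_i$ from the vanishing full binomial sum. The paper merely presents the single-factor computation as a separate $r=1$ base case before multiplying; the substance is identical.
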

\begin{proof}
	First consider the case $r=1$. As $E_1\neq \emptyset$ we have
	\[  
		\sum_{A\subseteq E_1}(-1)^{|A|}=0
	\]
	and consequently
	\[  
		\sum_{A\in  M_{k1}}(-1)^{|A|}=\sum_{A\subset E_1}(-1)^{|A|}=(-1)^{k-1}.
	\]
	For $r>1$, the calculation of the sum
	\begin{align*}
		\sum_{A\in M_{kr}}(-1)^{|A|} &= \sum_{A_1\in 2^{E_1}\setminus \{E_1\}}
		\cdots\sum_{A_r\in 2^{E_r}\setminus \{E_r\}} (-1)^{|A_1\cup\cdots\cup A_r|} \\
		&= \sum_{A_1\in 2^{E_1}\setminus \{E_1\}}(-1)^{|A_1|} \cdots
		\sum_{A_r\in 2^{E_r}\setminus \{E_r\}}(-1)^{|A_r|} \\
		&= (-1)^{(k-1)r}
	\end{align*}
	yields the proof of the statement.
\end{proof}
\begin{lemma}\label{lemma:parity}
	Let $k,r$ be two positive integers and $\{E_1,\ldots,E_r\}$ a partition of a set $E$ 
	with exactly $r$ blocks of size $k$. Define a family of subsets of $E$ by
	\[ \mathcal{F}(k,r) =\{A\mid A\subseteq E\text{ and }
	\exists j\in \{1,\ldots,r \}:E_j\subseteq A \}. \]
	Then
	\[ \sum_{A\in \mathcal{F}(k,r)} (-1)^{|A|} = (-1)^{kr-r+1}. \]
\end{lemma}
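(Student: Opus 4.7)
The plan is to compute the sum over the complement of $\mathcal{F}(k,r)$ in $2^E$ and then use the fact that the full alternating sum over $2^E$ vanishes, together with the preceding Lemma \ref{lemma:pi}.

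First I would observe that a subset $A \subseteq E$ fails to lie in $\mathcal{F}(k,r)$ precisely when no block $E_j$ is entirely contained in $A$, i.e.\ when $A \cap E_j$ is a proper subset of $E_j$ for every $j \in \{1,\ldots,r\}$. The assignment $A \mapsto (A\cap E_1,\ldots,A\cap E_r)$ is a bijection between $2^E \setminus \mathcal{F}(k,r)$ and the product family $\mathcal{M}_{kr} = (2^{E_1}\setminus\{E_1\})\times\cdots\times(2^{E_r}\setminus\{E_r\})$ from Lemma \ref{lemma:pi}, and it preserves cardinality since the $E_j$ partition $E$. Hence
\begin{equation*}
	\sum_{A \subseteq E,\, A \notin \mathcal{F}(k,r)} (-1)^{|A|} \;=\; \sum_{A \in \mathcal{M}_{kr}} (-1)^{|A|} \;=\; (-1)^{(k-1)r}
\end{equation*}
by Lemma \ref{lemma:pi}.

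Next, since $|E| = kr \geq 1$, the full alternating sum vanishes,
\begin{equation*}
	\sum_{A\subseteq E}(-1)^{|A|} \;=\; 0,
\end{equation*}
so splitting this sum according to membership in $\mathcal{F}(k,r)$ gives
\begin{equation*}
	\sum_{A\in \mathcal{F}(k,r)}(-1)^{|A|} \;=\; -\sum_{A\in \mathcal{M}_{kr}}(-1)^{|A|} \;=\; (-1)^{(k-1)r+1}.
\end{equation*}
The exponent $(k-1)r+1 = kr - r + 1$ matches the one claimed in the lemma, which concludes the argument.

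I do not expect any real obstacle: the only points that need care are verifying the bijection with $\mathcal{M}_{kr}$ (which relies on the fact that the blocks are disjoint, so that $|A| = \sum_j |A\cap E_j|$) and the mild parity check $(k-1)r+1 \equiv kr - r + 1 \pmod 2$. An alternative, slightly longer route would be direct inclusion--exclusion over the events $\{A : E_j \subseteq A\}$, where all terms except the one indexed by the full set $\{1,\ldots,r\}$ vanish because they contain a factor $\sum_{A' \subseteq E \setminus F}(-1)^{|A'|} = 0$; this would reproduce the same answer but without invoking Lemma \ref{lemma:pi}, so the route above is preferable for economy.
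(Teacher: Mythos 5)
Your proof is correct, and it takes a genuinely different route from the paper's. The paper decomposes $\mathcal{F}(k,r)$ itself by peeling off the last block: it writes $\mathcal{F}(k,r)$ as the disjoint union of $\mathcal{M}'$ (sets containing $E_r$ whose trace on $E'=E\setminus E_r$ contains no block $E_i$ with $i<r$) and $\mathcal{M}''$ (sets whose trace on $E'$ contains some block, with arbitrary intersection with $E_r$); it then applies Lemma \ref{lemma:pi} with $r-1$ blocks to evaluate the sum over $\mathcal{M}'$ as $(-1)^{(k-1)(r-1)+k}$, and kills the sum over $\mathcal{M}''$ by factoring out $\sum_{B\subseteq E_r}(-1)^{|B|}=0$. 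You instead observe that the complement of $\mathcal{F}(k,r)$ in $2^E$ is exactly $\mathcal{M}_{kr}$ with all $r$ blocks, apply Lemma \ref{lemma:pi} once, and subtract from the vanishing total sum over $2^E$. Your version is shorter, avoids the two-case decomposition entirely, and makes transparent why Lemma \ref{lemma:pi} is stated the way it is; the only point needing the care you already flagged is that the tuple-versus-union identification of elements of $\mathcal{M}_{kr}$ preserves cardinality because the blocks are disjoint (an identification the paper itself makes implicitly when it writes $|A_1\cup\cdots\cup A_r|=|A_1|+\cdots+|A_r|$). Both arguments deliver the same exponent $kr-r+1$.
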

\begin{proof}
	Define $E'=E\setminus E_r$,
	\begin{align*} 
		\mathcal{M}' &=\{A\cup E_r\mid A\subseteq E' \text{ and }
		\nexists i\in\{1,\ldots,r-1\}:E_i\subseteq A \},\\
		\mathcal{M}'' &= \{A\cup B \mid A\subseteq E',B\subseteq E_r \text{ and }
		\exists i\in\{1,\ldots,r-1\}:E_i\subseteq A \}.
	\end{align*}
	We can rewrite the definition of $\mathcal{M}'$ as
	\[ 
		\mathcal{M}'=\{A\cup E_r \mid A\in 
		2^{E_1}\setminus \{E_1\}\times\cdots\times 2^{E_{r-1}}\setminus \{E_{r-1}\} \},
	\]
	which shows that, according to Lemma \ref{lemma:pi}, each set of $\mathcal{M}'$ is 
	a disjoint union of a set of $\mathcal{M}_{k,r-1}$ and $E_r$. 
	By Lemma \ref{lemma:pi}, we obtain
	\begin{align*}
		\sum_{A\in \mathcal{M}'} (-1)^{|A|} 
		&= \sum_{A'\in \mathcal{M}_{k,r-1}} (-1)^{|A'|+|E_r|}=(-1)^{(k-1)(r-1)+k}
		=(-1)^{kr-r+1}.
	\end{align*}
	Observe that the set $\mathcal{F}(k,r)$ defined in Lemma \ref{lemma:pi} can be 
	represented as the disjoint union
	\[ 
		\mathcal{F}(k,r) = \mathcal{M}' \cup \mathcal{M}'',
	\]
	which implies
	\begin{align*} 
		\sum_{A\in \mathcal{F}(k,r)} (-1)^{|A|} 
		&= \sum_{A\in \mathcal{M}'} (-1)^{|A|} + \sum_{A\in \mathcal{M}''} (-1)^{|A|}. 
	\end{align*}
	In order to complete the proof, we show that the second sum vanishes. This follows from
	\[ 
		\mathcal{M}'' = 2^{E_r}\times \{A \mid A\subseteq E' \text{ and }
		\exists i\in\{1,\ldots,r-1\}:E_i\subseteq A \}
	\]
	and therefore
	\[ 
		\sum_{A\in \mathcal{M}''} (-1)^{|A|} = \sum_{A'\subseteq E_r} (-1)^{|A'|}
		\sum_{A''\in \mathcal{M}_{k,r-1}} (-1)^{|A''|}=0,
	\]
	since the first sum at the right-had side equals zero.
\end{proof}

\begin{theorem}\label{theo:complete}
	Let $G=(V,E)=K_{p,q}$ be a complete bipartite graph with $p+q$ vertices. Then
	\[ \sum_{F\subseteq E} (-1)^{|F|}N(G-F,x) = (-1)^{q-1}x^p + (-1)^{p-1}x^q.\]
\end{theorem}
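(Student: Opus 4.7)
The plan is to mimic the opening manipulation of Theorem~\ref{theo:null}, identify the few vertex sets $W \in \mathcal{N}(G)$ whose associated inner sum does not vanish, and then evaluate those inner sums using Lemma~\ref{lemma:parity}. Writing $F' = E \setminus F$ and swapping the two summations exactly as in the proof of Theorem~\ref{theo:null} gives
\[
\sum_{F \subseteq E}(-1)^{|F|} N(G-F, x) = \sum_{W \in \mathcal{N}(G)} x^{|W|} \sum_{F \in \mathcal{E}(W)}(-1)^{m - |F|},
\]
where $m = pq$ and $\mathcal{E}(W) = \{F \subseteq E : W \in \mathcal{N}((V,F))\}$ as before.

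Let $(A,B)$ be the bipartition of $K_{p,q}$ with $|A|=p$ and $|B|=q$; then $\mathcal{N}(G) = 2^A \cup 2^B$. For any $W \in \mathcal{N}(G)$ with $W \notin \{A,B\}$ (in particular $W = \emptyset$), one has $\partial W \subsetneq E$, so the edge-toggling involution from the proof of Theorem~\ref{theo:null} applies verbatim and the inner sum vanishes. Thus only the two extremal neighborhoods $W=A$ and $W=B$ can contribute.

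For $W = A$, a set $F \subseteq E$ lies in $\mathcal{E}(A)$ iff some $z \in B$ has $N_{(V,F)}(z) \supseteq A$, i.e.\ iff $F$ contains the whole star at some $z \in B$. Partitioning $E$ into the $q$ stars $E_z = \{\{z,a\} : a \in A\}$ of size $p$ identifies $\mathcal{E}(A)$ with the family $\mathcal{F}(p,q)$ of Lemma~\ref{lemma:parity}, so
\[
\sum_{F \in \mathcal{E}(A)}(-1)^{m-|F|} = (-1)^{pq}\,(-1)^{pq - q + 1} = (-1)^{q-1},
\]
contributing $(-1)^{q-1} x^p$. The computation for $W=B$ is symmetric in the two sides of the bipartition and yields $(-1)^{p-1}x^q$. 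Adding these produces the claimed formula.

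The only real obstacle is bookkeeping: choosing the correct star partition of $E$ so that $\mathcal{E}(A)$ coincides with $\mathcal{F}(p,q)$, and carefully absorbing the factor $(-1)^m = (-1)^{pq}$ into the exponent that Lemma~\ref{lemma:parity} supplies.
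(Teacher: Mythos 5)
Your proof is correct and follows essentially the same route as the paper: reuse the double-sum rewriting from Theorem~\ref{theo:null}, kill the inner sum for every $W$ other than the two partition classes via the edge-toggling involution, and identify $\mathcal{E}(A)$ with the star-partition family $\mathcal{F}(p,q)$ of Lemma~\ref{lemma:parity} to extract $(-1)^{q-1}x^p$ (and symmetrically $(-1)^{p-1}x^q$). The sign bookkeeping $(-1)^{pq}(-1)^{pq-q+1}=(-1)^{q-1}$ matches the paper's computation exactly.
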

\begin{proof}
	We use the presentation of the neighborhood polynomial as in the proof
	of Theorem \ref{theo:null}:
	\[ \sum_{F\subseteq E} (-1)^{|F|}N(G-F,x) 
	=  \sum_{W\in \mathcal{N}(G)} x^{|W|} \sum_{F\in \mathcal{E}(W)} (-1)^{m-|F|}  \]
	If $W$ is not a partition set of $K_{p,q}$ then there is again an edge that is not
	contained in $\partial W$, which can be used to show that the inner sum vanishes.
	Hence we can assume that $W$ is one of the two partition sets, say $|W|=p$. Then the
	minimal sets in $\mathcal{E}(W)$ are exactly $q$ disjoint sets of cardinality $p$ each.
	We observe that $\mathcal{E}(W)$ has exactly the structure of the set family
	$\mathcal{F}(k,r)$ employed in Lemma \ref{lemma:parity} with $k=p$ and $r=q$.
	From Lemma \ref{lemma:parity}, we obtain
	\[\sum_{A\in \mathcal{F}(p,q)} (-1)^{pq-|A|} 
	= (-1)^{pq-(pq-q+1)} = (-1)^{q-1} , \]
	which provides the term $(-1)^{q-1}x^p$ in the theorem, the second one is obtained
	in the same way.
\end{proof}

For the empty (edge-less) graph $G$, we have
\[ 
	\sum_{F\subseteq E} (-1)^{|F|}N(G-F,x) = N(G,x) = 1. 
\]
In the following statement, we use the notation $G\approx K_{p,q}$ to indicate that
$G$ is isomorphic to the disjoint union of $K_{p,q}$ and an empty graph.
\begin{theorem}\label{theo:N-poly}
	Define for any graph $G$
	\[ 
		h(G,x) = 
		\left\lbrace  
			\begin{array}{l}
				(-1)^{q+1}x^p + (-1)^{p+1}x^q, \text{ if }G\approx K_{p,q}, \\
				1, \text{ if }G\text{ is empty,} \\
				0, \text{ otherwise.}
			\end{array}
		\right. 
	\]
	The neighborhood polynomial of any graph $G$ satisfies
	\begin{align*} 
		N(G,x) &= \sum_{F\subseteq E} h((V,F),x) \\
			   &= 1+ \sum_{F\subseteq E:(V,F)\approx K_{p,q}}(-1)^{q+1}x^p + (-1)^{p+1}x^q.
	\end{align*}
\end{theorem}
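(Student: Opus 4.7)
The plan is to consolidate Theorems \ref{theo:null} and \ref{theo:complete} (together with the trivial edgeless case) into a single identity
\[
\sum_{F \subseteq E} (-1)^{|F|} N(G - F, x) = h(G, x),
\]
valid for \emph{every} graph $G = (V, E)$, and then to recover $N(G, x)$ from this identity by Möbius inversion on the Boolean lattice $2^{E}$.

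First I would verify the master identity by cases. If $G$ is edgeless, the sum reduces to its single term $N(G,x) = 1 = h(G,x)$. If $G \approx K_{p,q}$, then $E(G) = E(K_{p,q})$ and the isolated vertices are irrelevant to every $N(G - F, x)$ (by the invariance of $N$ under adding isolated vertices noted in Section~2), so Theorem \ref{theo:complete} gives $(-1)^{q-1}x^{p} + (-1)^{p-1}x^{q} = h(G,x)$. In all remaining cases Theorem \ref{theo:null} gives $0 = h(G,x)$.

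Next I apply the master identity to every spanning subgraph $(V,F)$ with $F \subseteq E$. Reindexing via $F'' = F \setminus F'$ turns the identity into
\[
\sum_{F'' \subseteq F} (-1)^{|F''|} N((V, F''), x) \;=\; (-1)^{|F|} h((V, F), x).
\]
Setting $\tilde f(F) := (-1)^{|F|} N((V,F), x)$ and $g(F) := (-1)^{|F|} h((V,F), x)$, this reads $g(F) = \sum_{F'' \subseteq F} \tilde f(F'')$. Standard Möbius inversion on $2^{E}$ yields $\tilde f(F) = \sum_{F'' \subseteq F} (-1)^{|F| - |F''|} g(F'')$, and after cancelling the common factor $(-1)^{|F|}$ on both sides one obtains
\[
N((V, F), x) = \sum_{F'' \subseteq F} h((V, F''), x).
\]
Specialising to $F = E$ gives the first displayed formula. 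The second formula is then a readout of the definition of $h$: the summand $F = \emptyset$ contributes $1$, the summands with $(V,F)\approx K_{p,q}$ contribute $(-1)^{q+1}x^{p} + (-1)^{p+1}x^{q}$, and all other summands vanish.

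The only point requiring genuine care is the extension of Theorem \ref{theo:complete} from $G = K_{p,q}$ to $G \approx K_{p,q}$, which the isolated-vertex invariance of $N$ handles immediately; everything else is bookkeeping for the Möbius inversion.
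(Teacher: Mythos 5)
Your proposal is correct and follows essentially the same route as the paper: it combines Theorem \ref{theo:null}, Theorem \ref{theo:complete}, and the edgeless case into the single identity $\sum_{F\subseteq E}(-1)^{m-|F|}N((V,F),x)=h(G,x)$ and then inverts over the Boolean lattice $2^{E}$. The only difference is that you spell out the sign bookkeeping of the M\"obius inversion and the reduction from $G\approx K_{p,q}$ to $K_{p,q}$ via isolated-vertex invariance, both of which the paper leaves implicit.
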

\begin{proof}
	The statements of Theorem \ref{theo:null} and Theorem \ref{theo:complete} can be 
	combined to
	\[ 
		\sum_{F\subseteq E} (-1)^{m-|F|}N((V,F),x) = h(G,x).
	\]
	Now the statement follows by M\"{o}bius inversion.
\end{proof}

\section{Relations between Neighborhood and Domination Polynomials}

\begin{theorem}\label{theo:D-N}
	Let $G=(V,E)$ be a graph of order $n$ and $\bar{G}$ its complement, then
	\[ 
		D(G,x) + N(\bar{G},x) = (1+x)^n.
	\]
\end{theorem}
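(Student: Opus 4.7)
The plan is to establish the identity by showing that $\mathcal{D}(G)$ and $\mathcal{N}(\bar{G})$ form a partition of the power set $2^V$. Once this bijective/combinatorial fact is in hand, the generating-function identity is immediate: summing $x^{|W|}$ over all $W \subseteq V$ gives $(1+x)^n$, and splitting the sum according to whether $W$ dominates $G$ or not yields $D(G,x) + N(\bar{G},x)$.

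The core observation I would use is that $N_{\bar{G}}(v) = V \setminus N_G[v]$, i.e., the open neighborhood of $v$ in the complement consists of all vertices that $v$ does not dominate in $G$. With this in hand, I would unwind the definition of the neighborhood complex:
\[
W \in \mathcal{N}(\bar{G}) \iff \exists\, v \in V \text{ with } W \subseteq N_{\bar{G}}(v) = V \setminus N_G[v].
\]
Since $v \notin N_{\bar{G}}(v)$, any such $v$ automatically lies outside $W$, and $W \cap N_G[v] = \emptyset$ is exactly the condition that $v$ is not dominated by $W$ in $G$. Thus $W \in \mathcal{N}(\bar{G})$ if and only if there exists a vertex of $G$ not dominated by $W$, i.e., if and only if $W$ fails to be a dominating set of $G$.

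The two families $\mathcal{D}(G)$ and $\mathcal{N}(\bar{G})$ are therefore complementary subsets of $2^V$, and their union is all of $2^V$. It follows that
\[
D(G,x) + N(\bar{G},x) = \sum_{W \in \mathcal{D}(G)} x^{|W|} + \sum_{W \in \mathcal{N}(\bar{G})} x^{|W|} = \sum_{W \subseteq V} x^{|W|} = (1+x)^n.
\]
There is no real obstacle here; the only point requiring a bit of care is the boundary behavior at $W = V$ (dominating, not in $\mathcal{N}(\bar{G})$ since no witness $v$ exists outside $W$) and at $W = \emptyset$ (in $\mathcal{N}(\bar{G})$, and not dominating whenever $V \neq \emptyset$), both of which are consistent with the complementarity just established.
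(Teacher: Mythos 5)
Your proof is correct and follows essentially the same route as the paper: both arguments rest on the observation that $W\subseteq N_{\bar{G}}(v)$ is equivalent to $N_G[v]\cap W=\emptyset$, so that $\mathcal{N}(\bar{G})$ is exactly the family of non-dominating sets and the two families partition $2^V$. Your explicit formulation $N_{\bar{G}}(v)=V\setminus N_G[v]$ is a clean way to package the same two implications the paper proves separately.
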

\begin{proof}
	The right-hand side of the equation is the ordinary generating function for all
	subsets of $V$. Therefore it suffices to show that for any graph $G=(V,E)$ the relation
	\[ 
		\mathcal{D}(G) \cup \mathcal{N}(\bar{G}) = 2^V
	\]
	is satisfied.
	Let $W\subseteq V$ be a non-dominating set of $G$. Then there exits a vertex $v\in V$
	with $N_G[v]\cap W=\emptyset $. Consequently $v$ is not adjacent to any vertex of $W$
	in $G$, which implies that $v$ is adjacent to each vertex of $W$ in $\bar{G}$. We
	conclude that $W\subseteq N_{\bar{G}}(v)$ and hence $W\in \mathcal{N}(\bar{G})$.
	
	Now let $W$ be a vertex set with $W\in \mathcal{N}(\bar{G})$. Then there is a vertex
	$v\in V$ with $W\subseteq N_{\bar{G}}(v)$, which implies that $N_G[v]\cap W=\emptyset $.
	We conclude that $W\notin \mathcal{D}(G)$. We have shown that any vertex subset of $V$
	belongs either to $\mathcal{D}(G)$ or to $\mathcal{N}(\bar{G})$, which completes the proof.
\end{proof}

\begin{theorem}
	Let $G$ be a simple graph of order $n$ such that its complement $\bar{G}$ has 
	exactly $a(G)$ subgraphs isomorphic to $K_{2p,2q}$ and exactly $b(G)$ subgraphs 
	isomorphic to $K_{2p+1,2q+1}$. Then
	\[
		d(G)=2^n -1 +2[a(G)-b(G)].
	\]
\end{theorem}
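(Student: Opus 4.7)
The plan is to combine the two main results already established: Theorem \ref{theo:D-N}, which links $D(G,x)$ to $N(\bar{G},x)$ via $D(G,x)+N(\bar{G},x)=(1+x)^n$, and Theorem \ref{theo:N-poly}, which expresses $N(\bar{G},x)$ as a sum over edge subsets of $\bar{G}$ that induce (up to isolated vertices) a complete bipartite graph. Both identities are polynomial equations, and the claim is essentially their evaluation at $x=1$.

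First I would substitute $x=1$ into Theorem \ref{theo:D-N} to get $d(G)=D(G,1)=2^n-N(\bar{G},1)$. This reduces the problem to computing $N(\bar{G},1)$. Then I would apply Theorem \ref{theo:N-poly} to $\bar{G}$ and set $x=1$, obtaining
\[
N(\bar{G},1)=1+\sum_{F\subseteq \bar{E}:(V,F)\approx K_{p,q}}\bigl[(-1)^{q+1}+(-1)^{p+1}\bigr].
\]
Each edge subset $F$ with $(V,F)\approx K_{p,q}$ corresponds bijectively to a $K_{p,q}$-subgraph of $\bar{G}$, since the remaining vertices are necessarily isolated in $(V,F)$.

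Next I would analyze the weight $(-1)^{q+1}+(-1)^{p+1}$ according to the parities of $p$ and $q$. If $p$ and $q$ have opposite parities the weight is $0$, so only same-parity bipartite subgraphs contribute. If both are odd (i.e.\ $K_{2p'+1,2q'+1}$) the weight equals $+2$, and if both are even (i.e.\ $K_{2p',2q'}$) the weight equals $-2$. I should briefly note that the symmetric formula $(-1)^{q+1}+(-1)^{p+1}$ also handles the diagonal case $p=q$ correctly, yielding $\pm 2$ per subgraph, so the totals match the counts $a(G)$ and $b(G)$ without any factor-of-two adjustment.

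Grouping these contributions gives $N(\bar{G},1)=1-2a(G)+2b(G)$, and substituting back into $d(G)=2^n-N(\bar{G},1)$ produces exactly $d(G)=2^n-1+2[a(G)-b(G)]$. There is no real obstacle here: all the heavy lifting was done in Theorem \ref{theo:N-poly}, and the only item requiring care is the sign bookkeeping in the parity case analysis, in particular being explicit that odd$\times$odd subgraphs contribute $+2$ and even$\times$even subgraphs contribute $-2$, so the subtraction $a(G)-b(G)$ is the right combination.
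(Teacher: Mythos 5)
Your proposal is correct and follows essentially the same route as the paper: evaluate Theorem \ref{theo:D-N} at $x=1$, substitute $N(\bar{G},1)$ via Theorem \ref{theo:N-poly}, and do the parity case analysis on $(-1)^{q+1}+(-1)^{p+1}$. Your extra remark on the diagonal case $p=q$ is a harmless (and correct) addition that the paper leaves implicit.
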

\begin{proof}
	By Theorem \ref{theo:D-N}, we obtain
	\[ 
		d(G) = D(G,1) = 2^n - N(\bar{G},1).
	\]
	The substitution of the last term according Theorem \ref{theo:N-poly} results in
	\[ 
		d(G) = 2^n - 1 - \sum_{F\subseteq E:(V,F)\approx K_{p,q}}(-1)^{q+1} + (-1)^{p+1}.
	\]
	We observe that the terms of the sum vanish when the parity of $p$ and $q$ differs.
	A term equals 2 if both $p$ and $q$ are odd, it equals -2 if both are even.
\end{proof}
\begin{corollary}[Brouwer, \cite{Brouwer2009}]
	The number of dominating sets of any finite graph is odd.
\end{corollary}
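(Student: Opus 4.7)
The plan is to read off the corollary as an immediate parity consequence of the preceding theorem, rather than to reprove it from scratch. The theorem gives the closed formula
\[
	d(G) = 2^n - 1 + 2[a(G)-b(G)],
\]
so I would start from this identity and analyze the parity of the right-hand side term by term.

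First I would note that for any graph $G$ with $n \geq 1$ vertices, $2^n$ is even, hence $2^n - 1$ is odd. Second, the quantity $2[a(G) - b(G)]$ is manifestly even, since $a(G)$ and $b(G)$ are nonnegative integers (counts of subgraphs of $\bar{G}$ isomorphic to $K_{2p,2q}$ and $K_{2p+1,2q+1}$, respectively). The sum of an odd integer and an even integer is odd, which yields the corollary.

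The only thing to be slightly careful about is the degenerate case $n = 0$, where the formula gives $2^0 - 1 + 0 = 0$, but $d(G) = 1$ (the empty set dominates the empty graph, which is the unique dominating set). I would handle this by either explicitly excluding the null graph or by verifying it as a trivial base case, since $1$ is odd and the statement still holds. For $n \geq 1$ the argument above applies directly.

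Since all the heavy lifting has already been done in the theorem, there is essentially no obstacle here — the proof is a one-line parity check. The only minor subtlety is making sure the reader recognizes that $a(G) - b(G)$ is an integer (possibly negative), so that $2[a(G) - b(G)]$ is unambiguously an even integer regardless of sign; but this is immediate from the definitions.
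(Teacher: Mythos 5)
Your proof is correct and is exactly the intended argument: the paper states this corollary without proof, as an immediate parity consequence of the formula $d(G)=2^n-1+2[a(G)-b(G)]$, which is precisely the one-line check you carry out. Your attention to the degenerate case $n=0$ is a reasonable extra precaution but changes nothing of substance.
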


\printbibliography

\end{document}